\newtheorem{thm}{Theorem}[section]
\newtheorem{theorem}[thm]{Theorem}
\newtheorem{corollary}[thm]{Corollary}
\newtheorem{proposition}[thm]{Proposition}
\newtheorem{lemma}[thm]{Lemma}
\newtheorem*{theorem*}{Theorem}
\newtheorem*{corollary*}{Corollary}
\theoremstyle{definition}
\newtheorem*{defn*}{Definiton}
\newtheorem{example}[thm]{Example}
\newtheorem*{ack}{Acknowledgements}
\newtheorem{remark}[thm]{Remark}
\newcommand{\N}{\mathbb{N}} 
\newcommand{\Z}{\mathbb{Z}} 
\newcommand{\G}{\Gamma}
\newcommand{\La}{\Lambda}
\newcommand{\Id}{\mathrm{Id}}
\newcommand{\Sub}{\operatorname{Sub}}
\newcommand{\Stab}{\operatorname{Stab}}
\newcommand{\act}{\!\curvearrowright\!}
\newcommand{\st}{\operatorname{St}}
\newcommand{\sA}{\mathsf{A}}
\newcommand{\sF}{\mathsf{F}}
\newcommand{\cB}{\mathcal{B}}
\newcommand{\cF}{\mathcal{F}}
\newcommand{\cP}{\mathcal{P}}
\newcommand{\cU}{\mathcal{U}}
\newcommand{\one}{\boldsymbol{1}}
\title{A dichotomy for topological full groups}
\author{Eduardo Scarparo}
\address{Eduardo Scarparo\\ University of Glasgow\\ United Kingdom}
\email{eduardo.scarparo@glasgow.ac.uk}
\thanks{This project has received funding from the European Research Council (ERC) under the European Union's Horizon 2020 research and innovation programme (grant agreement No. 817597).}
\begin{document}


\begin{abstract}
Given a minimal action $\alpha$ of a countable group on the Cantor set, we show that the alternating full group $\sA(\alpha)$ is non-amenable if and only if the topological full group $\sF(\alpha)$ is $C^*$-simple.  This implies, for instance, that the Elek-Monod example of non-amenable topological full group coming from a Cantor minimal $\Z^2$-system is $C^*$-simple.

\end{abstract}
\maketitle
\section{Introduction}

Given an action $\alpha$ of a group on the Cantor set $X$, the \emph{topological full group} of $\alpha$, denoted by $\mathsf{F}(\alpha)$, is the group of homeomorphisms on $X$ which are locally given by $\alpha$. 

In \cite{JM13}, Juschenko and Monod showed that topological full groups of Cantor minimal $\Z$-systems are amenable.  Together with results of Matui (\cite{Mat06}), this gave rise to the first examples of infinite, simple, finitely generated, amenable groups. On the other hand,  in \cite{EM13}, Elek and Monod constructed an example of a free minimal $\Z^2$-subshift whose topological full group contains a free group. 

A group $\G$ is said to have the \emph{unique trace property} if its reduced $C^*$-algebra $C^*_r(\G)$ has a unique tracial state and to be \emph{$C^*$-simple} if $C^*_r(\G)$ is simple.  In \cite{BKKO},  Breuillard,  Kalantar,  Kennedy and Ozawa showed that $\G$ has the unique trace property if and only if it does not contain any non-trivial amenable normal subgroup, and in \cite{Ken20} Kennedy showed that $\G$ is $C^*$-simple if and only if it does not contain any non-trivial amenable uniformly recurrent subgroup. 

By using this new characterization of $C^*$-simplicity,  Le Boudec and Matte Bon showed in  \cite{LBMB18} that the topological full group of a free minimal action of a countable non-amenable group on the Cantor set is $C^*$-simple, and asked whether the same conclusion holds if one does not assume freeness.  In \cite{BS19}, Brix and the author showed that it suffices to assume that the action is topologically free. In \cite{KTD19}, Kerr and Tucker-Drob obtained examples of $C^*$-simple topological full groups coming from actions of amenable groups.

Given an action $\alpha$ of a group on the Cantor set, Nekrashevych introduced in \cite{Nek19} the \emph{alternating full group} of the action,  which we denote by $\sA(\alpha)$. This is a normal subgroup of $\sF(\alpha)$ generated by certain copies of finite alternating groups. It was shown in \cite{Nek19} that if $\alpha$ is minimal, then $\sA(\alpha)$ is simple and is contained in every non-trivial normal subgroup of $\sF(\alpha)$.

In \cite{MB18}, Matte Bon obtained a classification of uniformly recurrent subgroups of topological full groups. By using this result, we show the following:
\begin{theorem*}[Theorem \ref{thm:main}]
Let $\alpha$ be a minimal action of a countable group on the Cantor set.  The following conditions are equivalent:
\begin{enumerate}
\item[(i)] $\sA(\alpha)$ is non-amenable;
\item[(ii)] Any group $H$ such that $\sA(\alpha)\leq H\leq\sF(\alpha)$ is $C^*$-simple;
\item[(iii)] There exists a $C^*$-simple group $H$ such that $\sA(\alpha)\leq H\leq\sF(\alpha)$.
\end{enumerate}
\end{theorem*}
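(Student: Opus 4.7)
The plan is the cycle (ii)$\Rightarrow$(iii)$\Rightarrow$(i)$\Rightarrow$(ii). The first implication holds trivially: take $H=\sF(\alpha)$. For (iii)$\Rightarrow$(i), recall from the introduction that by the BKKO characterization of the unique trace property, a $C^*$-simple group contains no non-trivial amenable normal subgroup. If $H$ is an intermediate $C^*$-simple group, then $\sA(\alpha)$ is normal in $H$ (being normal in $\sF(\alpha)$) and non-trivial (by Nekrashevych, it is contained in every non-trivial normal subgroup of $\sF(\alpha)$, applied to $\sF(\alpha)$ itself); hence it cannot be amenable.

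For the main implication (i)$\Rightarrow$(ii), assume $\sA(\alpha)$ is non-amenable and fix an intermediate $H$. By Kennedy's characterization of $C^*$-simplicity, it is enough to prove that every non-trivial URS of $H$ consists of non-amenable subgroups. Given a non-trivial URS $\cR$ of $H$ and an element $K\in\cR$, I plan to extract a non-trivial URS $\tilde\cR$ of $\sF(\alpha)$ — for instance, by taking a minimal invariant subset of the Chabauty-closure of the $\sF(\alpha)$-conjugation orbit of $K$ inside $\Sub(\sF(\alpha))$ — and apply Matte Bon's classification \cite{MB18} to it. This classification describes non-trivial URSs of $\sF(\alpha)$ in terms of the dynamics of $\alpha$, and in particular implies that the elements of $\tilde\cR$ contain, up to conjugation, a rigid stabilizer $\rist_{\sF(\alpha)}(U)$ of a clopen set $U\subseteq X$. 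Such a rigid stabilizer is canonically the topological full group of the induced minimal action on $U$, so its alternating full group is a subgroup of $\sA(\alpha)$ and — by a standard induced-groupoid argument — non-amenable under (i). Consequently every element of $\tilde\cR$, and in particular of $\cR$, is non-amenable.

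The main obstacle is the passage from URSs of the intermediate $H$ to URSs of $\sF(\alpha)$: the lifted family $\tilde\cR$ must be shown to remain non-trivial, and the conclusions of Matte Bon's theorem, stated for $\sF(\alpha)$, must be converted back into information about $K\in\cR\subseteq\Sub(H)$. The key input here should be Nekrashevych's result that $\sA(\alpha)$ is contained in every non-trivial normal subgroup of $\sF(\alpha)$, used to rule out degenerations $\tilde\cR=\{\{e\}\}$: if $K\neq\{e\}$, the normal closure of $K$ in $\sF(\alpha)$ contains $\sA(\alpha)$, giving a uniform lower bound on the ``size'' of $\sF(\alpha)$-conjugates of $K$ and preventing them from accumulating on the trivial subgroup. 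Making this lifting argument go through, in tandem with Matte Bon's classification and the induced-groupoid description of rigid stabilizers, is the real content of the proof.
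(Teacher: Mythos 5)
Your high-level frame (Kennedy's URS criterion plus Matte Bon's classification, with non-amenability of $\sA(\alpha)$ as the engine) is the same as the paper's, but two of your steps are genuine gaps. First, the passage from a URS of $H$ to a non-trivial URS of $\sF(\alpha)$: the argument that the normal closure of $K$ in $\sF(\alpha)$ contains $\sA(\alpha)$ and therefore the $\sF(\alpha)$-conjugates of $K$ cannot accumulate at $\{e\}$ does not work. Having a large normal closure says nothing about the Chabauty closure of the conjugacy class: every non-trivial subgroup of the simple group $\sA(\alpha)$ has full normal closure, while non-confined subgroups abound; and membership of $K$ in a non-trivial URS of $H$ only gives confinedness in $H$, not in $\sF(\alpha)$. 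In fact this lift is unnecessary: the result the paper uses (\cite[Theorem 6.1]{MB18}) describes confined subgroups and URSs of \emph{every} intermediate group $\sA(\alpha)\leq H\leq\sF(\alpha)$ directly, which is exactly how Lemma \ref{lem:pre} produces a finite $Q\subset X$ with $\st^0_{\sA(\alpha)}(Q)$ amenable when $H$ is not $C^*$-simple. (Two smaller points: non-amenability of the elements of $\tilde\cR$ only transfers back to $K$ via closedness of $\Sub_{am}(\sF(\alpha))$, since $K$ itself need not lie in $\tilde\cR$; and your justification that $\sA(\alpha)\neq\{e\}$ is circular --- non-triviality follows instead from minimality, which forces infinite orbits and hence non-degenerate multisections.)

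Second, and more seriously, the step you call a ``standard induced-groupoid argument'' --- that (i) forces the rigid stabilizers $\st_{\sA(\alpha)}(U^{\mathrm{c}})$, i.e.\ the alternating full groups of the restrictions to clopen sets, to be non-amenable --- is precisely the technical heart of the paper and is not an off-the-shelf fact about Kakutani equivalence. The paper proves (a version of) the contrapositive: if such a rigid stabilizer $\La$ is amenable, it fixes some $\mu\in\cP_\La(U)$; Lemma \ref{lem:cover} applied to $\La\act U$ upgrades this to $\mu\in\cP_\G(U)$, Proposition \ref{prop:mea} extends $\mu$ to a $\G$-invariant probability measure $\nu$ on $X$ of full support, and then $\rho:=(\Stab^0_{\sA(\alpha)})_*\nu$ is a non-trivial amenable IRS of $\sA(\alpha)$, which by simplicity of $\sA(\alpha)$ and the BKKO characterization forces $\sA(\alpha)$ to be amenable. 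Note that amenability of $\rho$ uses amenability of \emph{all} germ stabilizers $\sA(\alpha)_x^0$, which is exactly what Lemma \ref{lem:pre} supplies in the contrapositive setting; your direct approach has no access to this hypothesis, so even transplanting the paper's measure/IRS argument would not by itself yield the bald statement ``$\sA(\alpha)$ non-amenable $\Rightarrow$ every rigid stabilizer is non-amenable'' that your sketch relies on. Until that amenability-transfer step is actually proved, the proposal does not close the main implication (i)$\Rightarrow$(ii).
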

As a consequence, we obtain the following:
\begin{corollary*}[Corollary \ref{cor:der}]
Let $\alpha$ be a minimal action of a countable group on the Cantor set.  Then $\sF(\alpha)$ has the unique trace property if and only if it is $C^*$-simple. If $\sA(\alpha)=\sF(\alpha)'$, then $\sF(\alpha)$ is non-amenable if and only if it is $C^*$-simple. 
\end{corollary*}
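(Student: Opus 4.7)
The plan is to reduce both assertions to Theorem~\ref{thm:main} together with the Breuillard--Kalantar--Kennedy--Ozawa characterization of the unique trace property: a countable discrete group has a unique trace on its reduced $C^*$-algebra if and only if its amenable radical is trivial, and $C^*$-simplicity always implies the unique trace property (\cite{BKKO}).

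For the first assertion, the implication $C^*$-simple $\Rightarrow$ unique trace is immediate from the BKKO result cited above. For the converse, suppose $\sF(\alpha)$ has the unique trace property. The alternating full group $\sA(\alpha)$ is a normal subgroup of $\sF(\alpha)$, and it is nontrivial for any minimal action on the Cantor set (as it contains the copies of finite alternating groups used in Nekrashevych's construction \cite{Nek19}). Because the amenable radical of $\sF(\alpha)$ is trivial by BKKO, the nontrivial normal subgroup $\sA(\alpha)$ cannot be amenable. The implication (i) $\Rightarrow$ (ii) of Theorem~\ref{thm:main}, applied with $H:=\sF(\alpha)$, then yields $C^*$-simplicity of $\sF(\alpha)$.

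For the second assertion, assume $\sA(\alpha)=\sF(\alpha)'$. If $\sF(\alpha)$ is $C^*$-simple then it is non-amenable, since a nontrivial amenable group cannot have simple reduced $C^*$-algebra (the trivial representation would have nontrivial kernel). Conversely, suppose $\sF(\alpha)$ is non-amenable. Since $\sF(\alpha)/\sA(\alpha)=\sF(\alpha)/\sF(\alpha)'$ is abelian, hence amenable, and extensions of amenable by amenable are amenable, $\sA(\alpha)$ must itself be non-amenable. Theorem~\ref{thm:main} then gives $C^*$-simplicity of $\sF(\alpha)$.

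The argument is essentially bookkeeping around amenable radicals and abelianizations, so there is no genuine obstacle beyond correctly invoking Theorem~\ref{thm:main} and BKKO. The only point requiring a moment of care is the non-triviality of $\sA(\alpha)$ in the first assertion, which is classical once one unpacks Nekrashevych's definition.
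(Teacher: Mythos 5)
Your proposal is correct and follows exactly the route the paper intends: the paper records this corollary as an immediate consequence of Theorem~\ref{thm:main}, and your filled-in details (BKKO's characterization of the unique trace property via the amenable radical, non-triviality and normality of $\sA(\alpha)$, and the amenable-by-abelian argument when $\sA(\alpha)=\sF(\alpha)'$) are precisely the bookkeeping the paper leaves implicit. No gaps to report.
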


It is still an open problem whether $\sA(\alpha)$ always coincides with $\sF(\alpha)'$, but in many cases this is known to be true.  For example, it follows from results of Matui (\cite{Mat15}) that this is the case for free Cantor minimal $\Z^n$-systems.  This implies that the example of non-amenable topological full group coming from an action of $\Z^2$ in \cite{EM13} is $C^*$-simple.
\begin{ack}
I am grateful to Eduard Ortega and Nicolás Matte Bon for helpful comments.  I also thank the anonymous referee for suggestions which helped to improve the presentation of this work and for pointing out an incorrection in the original formulation of Proposition \ref{prop:mea}.
\end{ack}
\section{Preliminaries}

\subsection*{Topological dynamics}Given a locally compact Hausdorff space $X$, we denote by $\cB(X)$ the Borel $\sigma$-algebra of $X$, and by $\cP(X)$ the space of regular probability measures on $X$.  

If $\G$ is a group acting by homeomorphisms on $X$, we say that $X$ is a \emph{locally compact $\G$-space}.  If $X$ admits no non-trivial $\G$-invariant closed subspaces, then we say that $X$ (or the action) is \emph{minimal}. Given $U\subset X$, let $\st_\G(U)$ consist of the elements of $\G$ which fix pointwise $U$, and $\st_\G(U)^0$ consist of the elements of $\G$ which fix pointwise a neighborhood of $U$.  To ease the notation, given $x\in X$, we let $\G_x:=\st_\G(\{x\})$ and $\G_x^0:=\st_\G(\{x\})^0$.

Denote by $\Sub(\G)$ the set of subgroups of $\G$, endowed with the \emph{Chabauty topology}; this is the restriction to $\Sub(\G)$ of the product topology on $\{0, 1\}^\G$, where 
every subgroup $\La\in \Sub(G)$ is identified with its characteristic function $\one_\La \in \{0, 1\}^\G$.  Notice that the space of amenable subgroups $\Sub_{am}(\G)$ is closed in $\Sub(\G)$. We consider $\Sub(\G)$ as a compact $\G$-space under the action by conjugation.  A subgroup $\La\leq\G$ is said to be \emph{confined} if  $\{e\}$ is not in the closure of the $\G$-orbit of $\La$.

An \emph{invariant random subgroup} (IRS) is a $\G$-invariant regular probability measure on $\Sub(\G)$. We say an IRS is \emph{amenable} if its support is contained in $\Sub_{am}(\G)$. By \cite[Corollary 4.3]{BKKO} and \cite[Corollary 1.5]{BDL16}, $\G$ has the unique trace property if and only if its unique amenable normal subgroup is $\{e\}$, if and only if its unique amenable IRS is $\delta_{\{e\}}$. 

A \emph{uniformly recurrent subgroup} (URS) is a $\G$-invariant closed minimal subspace $\cU\subset\Sub(\G)$. We say $\cU$ is \emph{amenable} if every element of $\cU$ is amenable. By \cite[Theorem 4.1]{Ken20}, $\G$ is $C^*$-simple if and only if its only amenable URS is $\{\{e\}\}$. Alternatively, $\G$ is $C^*$-simple if and only if it does not contain any confined amenable subgroup.

Suppose $\G$ is countable and $X$ is a minimal compact $\G$-space.  Let $\Stab_\G\colon X\to\Sub(\G)$ be the map given by $\Stab_\G(x):=\G_x$ and $\Stab_\G^0\colon X\to \Sub(\G)$ be the map given by $\Stab_\G^0(x):=\G_x^0$, for $x\in X$. Notice that $\Stab_\G$ and $\Stab_\G^0$ are Borel measurable and $\G$-equivariant. Moreover, the set $Y:=\{x\in X:\G_x=\G_x^0\}$ is dense in $X$ and $\Stab_\G(Y)$ is a URS, the so called \emph{stabilizer URS} of the action $\G\act X$ (for a proof of these last claims, see \cite[Section 2]{LBMB18}).

\subsection*{Topological full groups}

Fix an action $\alpha$ of a group $\G$ on the Cantor set $X$.  We say that a homeomorphism $h\colon U\to V$ between clopen subsets $U,V\subset X$ is \emph{locally given by $\alpha$} if there exist $g_1,\dots,g_n\in\G$ and clopen sets $A_1,\dots,A_n\subset U$ such that $U=\bigsqcup_{i=1}^n A_i$ and $h|_{A_i}=g_i|_{A_i}$ for $1\leq i \leq n$. The \emph{topological full group} of $\alpha$, denoted by $\mathsf{F}(\alpha)$, is the group of homeomorphisms $h\colon X\to X$ which are locally given by $\alpha$.

Given $d\in\N$, a \emph{$d$-multisection} is a collection of $d$ disjoint clopen sets $(A_i)_{i=1}^d\subset X$ and $d^2$ homeomorphisms $(h_{i,j}\colon A_i\to A_j)_{i,j=1}^d$ which are locally given by $\alpha$ and such that, for $1\leq i,j,k\leq d$,  it holds that $h_{j,k}h_{i,j}=h_{i,k}$ and $h_{i,i}=\Id_{A_i}$.

Given $d\in \N$, let $S_d$ and $A_d$ be the symmetric and alternating groups, respectively. Given a $d$-multisection $\cF=((A_i)_{i=1}^d,(h_{i,j})_{i,j=1}^d)$ and $\sigma\in S_d$, let $\cF(\sigma)\in\sF(\alpha)$ be given by $\cF(\sigma)|_{A_i}:=h_{\sigma(i),i}$, for $1\leq i \leq n$ and $\cF(\sigma)(x)=x$ for $x\notin\bigsqcup_{i=1}^n A_i$. The \emph{alternating full group} $\sA(\alpha)$ is the subgroup of $\sF(\alpha)$ generated by 
$$\{\cF(\sigma):\text{$d\in\N$, $\cF$ is a $d$-multisection, $\sigma\in A_d$}\}.$$ 

Notice that $\sA(\alpha)$ is normal in $\sF(\alpha)$ and that $\sA(\alpha)$ is contained in the derived subgroup $\sF(\alpha)'$. If $\alpha$ is a minimal action of a countable group on the Cantor set,  then $\sA(\alpha)$ is simple (\cite[Theorem 4.1]{Nek19}).

\begin{remark}
Alternatively, $\sF(\alpha)$ and $\sA(\alpha)$ can be described as groups of bisections of the groupoid of germs of $\alpha$. This is the point of view adopted in \cite{Nek19} and \cite{MB18}. Conversely, given a effective groupoid $G$ with unit space $G^{(0)}$ homeomorphic to the Cantor set, denote by $\alpha$ the natural action of the topological full group of $G$ on $G^{(0)}$.  Then the topological and alternating full groups of $G$ coincide with $\sF(\alpha)$ and $\sA(\alpha)$, respectively (\cite[Corollary 4.7]{NO19}).
\end{remark}

\section{$C^*$-simplicity of full groups}

Given a locally compact $\G$-space $X$ and $U\subset X$ open not necessarily invariant, let 
$$
\cP_\G(U):=\{\mu\in\cP(U):\forall g\in\G\  \forall A\in\cB(U),\ \ \mu(A\cap g^{-1}U)=\mu(gA\cap U)\}.
$$

Alternatively, one can characterize $\cP_\G(U)$ as the measures $\mu\in\cP(U)$ such that $\mu(gA)=\mu(A)$ for every $g\in\G$ and $A\in\cB(U)$ such that $gA\subset U$.

\begin{proposition}\label{prop:mea}
Let $X$ be a compact $\G$-space and $U\subset X$ open such that $X=\G. U$. Then the map $j\colon\cP_\G(X)\to\cP_\G(U)$ given by $j(\nu):=\frac{\nu|_{\cB(U)} }{\nu(U)}$ is a well-defined bijection.
\end{proposition}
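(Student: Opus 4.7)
The plan is to construct an explicit inverse to $j$ by using a finite partition of $X$ coming from translates of $U$. Since $X$ is compact and $X = \G.U$, choose $g_1 = e, g_2, \ldots, g_n \in \G$ with $X = \bigcup_{i=1}^n g_i U$, and set $V_i := g_i U \setminus \bigcup_{j<i} g_j U$, so that $V_1 = U$, $V_i \subset g_i U$, and $X = \bigsqcup_{i=1}^n V_i$.

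Well-definedness of $j$ is easy: for $\nu \in \cP_\G(X)$, invariance and the finite cover force $\nu(U) \geq \nu(X)/n > 0$, and the relation $j(\nu)(gA) = j(\nu)(A)$ when $A, gA \subset U$ is immediate from $\G$-invariance of $\nu$. Injectivity of $j$ follows from the identity
\[
\nu(A) = \sum_{i=1}^n \nu(A \cap V_i) = \sum_{i=1}^n \nu\bigl(g_i^{-1}(A \cap V_i)\bigr),
\]
valid for any $\nu \in \cP_\G(X)$ and $A \in \cB(X)$, since the right-hand side only depends on $\nu|_{\cB(U)}$ and on $\nu(U)$ (via normalization).

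For surjectivity, given $\mu \in \cP_\G(U)$, define a positive Borel measure on $X$ by
\[
\tilde{\nu}(A) := \sum_{i=1}^n \mu\bigl(g_i^{-1}(A \cap V_i)\bigr).
\]
The main step is proving $\tilde{\nu}$ is $\G$-invariant. For $g \in \G$ and $A \in \cB(X)$, using $gA \cap V_i = g(A \cap g^{-1}V_i)$ and partitioning $A \cap g^{-1}V_i$ by the $V_j$, set $B_{ij} := A \cap V_j \cap g^{-1}V_i$. Then $g_j^{-1} B_{ij} \subset U$ and $g_i^{-1} g B_{ij} \subset U$, and these two subsets of $U$ differ by the element $g_i^{-1} g g_j$, so the defining property of $\cP_\G(U)$ gives $\mu(g_i^{-1} g B_{ij}) = \mu(g_j^{-1} B_{ij})$. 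Summing and reversing the order yields $\tilde{\nu}(gA) = \sum_j \mu(g_j^{-1}(A \cap V_j)) = \tilde{\nu}(A)$. Since $V_1 = U$ and $V_i \cap U = \emptyset$ for $i \geq 2$, one has $\tilde{\nu}|_{\cB(U)} = \mu$; hence $\nu := \tilde{\nu}/\tilde{\nu}(X)$ lies in $\cP_\G(X)$ and satisfies $j(\nu) = \mu$.

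Affinity of $j$ is immediate from linearity of restriction and of the normalization. The principal obstacle is the $\G$-invariance of $\tilde{\nu}$: one must carefully introduce the double partition by $\{V_j\}$ and $\{g^{-1}V_i\}$ so that the quasi-invariance of $\mu$, which only applies to pairs of subsets already lying inside $U$, can be invoked on each piece $B_{ij}$.
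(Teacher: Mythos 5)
Your proof is correct and follows essentially the same route as the paper: the sets $V_i$ are exactly the translates $g_iA_i$ of the paper's partition, the formula for $\tilde{\nu}$ is the paper's candidate measure, and the double partition by $\{V_j\}$ and $\{g^{-1}V_i\}$ is the same device the paper uses (its sets $B_{i,j}$, $C_{i,j}$) to reduce $\G$-invariance to the quasi-invariance of $\mu$ inside $U$. The only difference is cosmetic: your choice $g_1=e$ makes the verification $\tilde{\nu}|_{\cB(U)}=\mu$ immediate, where the paper invokes the invariance property of $\mu$ once more.
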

\begin{proof}
Take $g_1,\dots,g_n\in\G$ such that $X=\bigcup_{i=1}^n g_iU$.  For $1\leq i \leq n$, let 
$$A_i:=U\setminus\bigcup_{j=1}^{i-1}g_i^{-1}g_j U.$$ 
Then $X=\bigsqcup_{i=1}^n g_iA_i$. Given $\nu\in\cP_\G(X)$, obviously $\nu(U)\geq1/n$, so that $j$ is a well-defined map. Moreover, given $A\in\cB(X)$, we have 
\begin{equation}\label{eq:dec}
\nu(A)=\sum_{i=1}^n\nu(g_i A_i\cap A)=\sum_{i=1}^n\nu(A_i\cap g_i^{-1} A).
\end{equation}
Since each $A_i$ is contained in $U$, this implies that $\nu$ is determined by its restriction to $\cB(U)$. 

If $\nu_1,\nu_2\in\cP_\G(X)$ are such that $j(\nu_1)=j(\nu_2)$, then $\nu_1|_U=\frac{\nu_1(U)}{\nu_2(U)}\nu_2|_U$.  Furthermore,  by \eqref{eq:dec},  we have
$$1=\nu_1(X)=\sum_{i=1}^n\nu_1(A_i)=\sum_{i=1}^n\frac{\nu_1(U)}{\nu_2(U)}\nu_2(A_i)=\frac{\nu_1(U)}{\nu_2(U)}\nu_2(X)=\frac{\nu_1(U)}{\nu_2(U)},$$
hence $\nu_1(U)=\nu_2(U)$. Consequently,  $\nu_1=\nu_2$ and $j$ is injective.

Let us now show that $j$ is surjective. Given $\mu\in\cP_\G(U)$ and $A\in\cB(X)$, let 
$$\nu(A):=\sum_{i=1}^n\mu(A_i\cap g_i^{-1}A).$$

Given $B\in \cB(U)$, we have

$$\nu(B)=\sum_{i=1}^n\mu(A_i\cap g_i^{-1}B)=\sum_{i=1}^n\mu(g_iA_i\cap B)=\mu(B),$$
so that $\nu|_{\cB(U)}=\mu$. 

We claim that $\nu$ is $\G$-invariant. Fix $A\in \cB(X)$ and $g\in\G$, and we will show that $\nu(A)=\nu(gA)$.  

For $1\leq i\leq n$, let $h_i:=g^{-1}g_i$,  $B_i:=A_i\cap g_i^{-1}A$ and $C_i:=A_i\cap g_i^{-1}gA=A_i\cap h_i^{-1} A$. By definition of $\nu$ we have $\nu(A)=\sum_{i=1}^n\mu(B_i)$ and $\nu(gA)=\sum_{i=1}^n\mu(C_i)$. 

Moreover, one can readily check that $A=\bigsqcup_{i=1}^n g_i B_i=\bigsqcup _{i=1}^n h_iC_i$.

For $1\leq i,j\leq n$, let $B_{i,j}:=B_i\cap g_i^{-1}h_jC_j$ and $C_{i,j}:=h_j^{-1}g_iB_i\cap C_j$. 

Notice that, for $1\leq i\leq n$, 
$$\bigsqcup_{j=1}^n B_{i,j}=B_i\cap g_i^{-1}A=B_i$$
and, for $1\leq j \leq n$,
$$\bigsqcup_{i=1}^n C_{i,j}=h_j^{-1}A\cap C_j=C_j.$$

Furthermore, $g_i B_{i,j}=h_jC_{i,j}$,  hence $\mu(B_{i,j})=\mu(C_{i,j})$, since $B_{i,j}$ and $C_{i,j}$ are contained in $U$ for every $i,j$. Therefore,  
\begin{align*}
\nu(A)=\sum_{i=1}^n\mu(B_i)=\sum_{i,j=1}^n \mu(B_{i,j})=\sum_{i,j=1}^n\mu(C_{i,j})=\sum_{j=1}^n\mu(C_j)=\nu(gA).
\end{align*}

Finally, we have that $j(\frac{\nu}{\nu(X)})=\frac{\nu|_{\cB(U)}/\nu(X)}{\nu(U)/\nu(X)}=\nu|_{\cB(U)}=\mu.$
\end{proof}
\begin{remark}
Let $\G\act X$ and $\La\act Y$ be actions on compact spaces. The actions are said to be \emph{Kakutani equivalent} \cite[Definition 2.14]{Li18} if there exist clopen sets $A\subset X$ and $B\subset Y$ such that $X=\G .A$, $Y=\La . B$ and the partial transformation groupoids obtained by restriction to $A$ and $B$ are isomorphic. Proposition \ref{prop:mea} implies that Kakutani equivalence induces a bijection between $\cP_\G(X)$ and $\cP_\La(X)$.
\end{remark}

The proof of the following result is analogous to \cite[Lemma 4.9.(2)]{NO19}.
\begin{lemma}\label{lem:cover}

Let $\alpha$ be a minimal action of a group $\G$ on the Cantor set $X$.  Given $U\subset X$ clopen,  $x\in U$ and $g\in\G$ such that $g(x)\in U$,  there exists a neighborhood $V$ of $x$ and $h\in\st_{\sA(\alpha)}(U^\mathsf{c})$ such that $g|_V=h|_V$.
\end{lemma}
\begin{proof}

Case 1: $g(x)\neq x$. Take $k\in \G$ such that $k(g(x))\in U\setminus\{x,g(x)\}$. Let $V$ be a clopen neighborhood of $x$ such that $V$, $g(V)$ and $kg(V)$ are disjoint subsets of $U$. Then the homeomorphisms  $h_{2,1}:=g|_V$ and $h_{3,1}:=kg|_V$ give rise to a $3$-multisection $\cF$ such that $\cF((123))|_V=g|_V$ and $\cF((123))\in\st_{\sA(\alpha)}(U^\mathsf{c})$.

Case 2: $g(x)=x$. Take $k\in \G$ such that $k(x)\in U\setminus\{ x\}$. By case 1, there are $h_1,h_2\in\st_{\sA(\alpha)}(U^\mathsf{c})$ and $V_1,V_2$ neighborhoods of $x$ and $k(x)$, respectively,  such that $k|_{V_1}=h_1|_{V_1}$ and $gk^{-1}|_{V_2}=h_2|_{V_2}$. Then $V:=V_1\cap k^{-1}(V_2)$ is a neighborhood of $x$ such that $h_2h_1|_V=g|_V$.
\end{proof}

The next lemma uses the same idea of \cite[Corollary 6.5]{MB18}. 

\begin{lemma}\label{lem:pre}
Let $\alpha$ be a minimal action of a countable group $\G$  on the Cantor set $X$ and $H$ a group such that $\sA(\alpha)\leq H\leq \sF(\alpha)$. Then $H$ is not $C^*$-simple if and only if $\sA(\alpha)_x^0$ is amenable for all $x\in X$.
\end{lemma}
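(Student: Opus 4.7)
The plan is to apply Kennedy's characterization of $C^*$-simplicity \cite{Ken20} recalled in the preliminaries, according to which $H$ fails to be $C^*$-simple precisely when it admits a non-trivial amenable URS. Both directions will revolve around the map
\[
\Phi\colon X\to\Sub(H),\qquad \Phi(x):=\sA(\alpha)_x^0,
\]
whose properties I first record. Because every element of $\sF(\alpha)$ has clopen fixed set, the condition $f\in\sA(\alpha)_x^0$ amounts to $x\in\Fix(f)$, so $\Phi$ is continuous. Normality of $\sA(\alpha)$ in $\sF(\alpha)$ (hence in $H$) gives $h\sA(\alpha)_x^0 h^{-1}=\sA(\alpha)_{h(x)}^0$ for $h\in H$, so $\Phi$ is $H$-equivariant. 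Lemma~\ref{lem:cover} implies the $H$-orbits in $X$ contain the (dense) $\G$-orbits, so $X$ is $H$-minimal; together with continuity and equivariance of $\Phi$, this makes $\Phi(X)$ an $H$-URS. Finally, $\Phi(X)\neq\{\{e\}\}$: minimality forces orbits to be infinite, so one can build a $d$-multisection with $d\geq 3$ whose support is a proper clopen subset of $X$, and a non-trivial $\sigma\in A_d$ yields $\cF(\sigma)\in\sA(\alpha)_x^0$ for all $x$ outside the support.

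For $(\Leftarrow)$, the hypothesis that every $\sA(\alpha)_x^0$ is amenable says exactly that $\Phi(X)\subseteq\Sub_{am}(H)$, producing the non-trivial amenable URS required by Kennedy's theorem. For $(\Rightarrow)$, assume $H$ admits a non-trivial amenable URS $\cU$. The set $\Phi(X)\cap\Sub_{am}(H)$ is closed and $H$-invariant inside the URS $\Phi(X)$, so by minimality of $\Phi(X)$ it is either empty or all of $\Phi(X)$. Hence it suffices to exhibit a single $x_0\in X$ and $\La\in\cU$ with $\sA(\alpha)_{x_0}^0\subseteq\La$: then $\sA(\alpha)_{x_0}^0$ is amenable as a subgroup of the amenable group $\La$, and the dichotomy forces every $\sA(\alpha)_x^0$ to be amenable.

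The main obstacle is producing this pair $(x_0,\La)$, and here I would adapt the germinal argument of \cite[Corollary 6.5]{MB18}. The strategy exploits the confinement of $\La\in\cU$ together with the abundance, inside any clopen subset of $X$, of elements of $\sA(\alpha)$ with arbitrarily small clopen support coming from multisections. Fixing a non-trivial $f\in\La$ with $f(y)\neq y$ and a small clopen neighborhood $U$ of $y$ with $U\cap f(U)=\emptyset$, one uses commutator constructions -- for $g\in\sA(\alpha)$ supported in $U$, the commutator $[f,g]$ is supported in $U\sqcup f(U)$ and restricts to $g^{-1}$ on $U$ -- to transfer generators of $\sA(\alpha)_{x_0}^0$, for a suitable $x_0$ near $y$, into $\La$. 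The delicate technical point, and where Matte Bon's germ-equivariant framework is essential, is to ensure that the transfer lands inside $\La$ itself rather than merely in the (potentially non-amenable) subgroup generated by $\La$ together with its $H$-conjugates. This is handled by refining the choice of $\La$ and $x_0$ using the minimality of the URS $\cU$, so that the elements involved in the commutators already lie in a single $\La$.
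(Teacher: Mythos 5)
Your argument has two genuine problems. First, the foundation of your map $\Phi$ is flawed: it is not true that every element of $\sF(\alpha)$ has clopen fixed set. An $f\in\sF(\alpha)$ locally agrees with elements of $\G$, so $\Fix(f)$ is closed but its interior need not be closed; this failure occurs precisely in the non--topologically-free situations that this lemma is really about (for topologically free actions the result was already known). What is true is only that $f\in\sA(\alpha)_x^0$ iff $x\in\interior\Fix(f)$, an open condition, so $\Phi=\Stab^0_{\sA(\alpha)}$ is merely ``lower semicontinuous''; it need not be continuous, $\Phi(X)$ need not be closed, and it need not be minimal. Hence your claim that $\Phi(X)$ is a URS of $H$ is unjustified, and both your $(\Leftarrow)$ direction and your ``empty or everything'' dichotomy in the $(\Rightarrow)$ direction rest on it. The paper instead works with the stabilizer URS recalled in the preliminaries, namely the closure of $\{\sA(\alpha)_y: y\in Y\}$ with $Y=\{x:\sA(\alpha)_x=\sA(\alpha)_x^0\}$, which is amenable (since $\Sub_{am}$ is closed and $\sA(\alpha)_y=\sA(\alpha)_y^0$ is amenable on $Y$) and non-trivial (your small-support multisection argument for non-triviality is fine and is exactly what is needed there); that it is a URS of $H$ is obtained from \cite[Theorem 6.1]{MB18}. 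Likewise, the correct way to propagate amenability from one point to all points is not a minimality dichotomy inside $\Phi(X)$ but the Chabauty-limit argument the paper uses: if $g_i x_0\to x$ then any limit point of $\sA(\alpha)_{g_ix_0}^0$ contains $\sA(\alpha)_x^0$ and is amenable.

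Second, and more seriously, your $(\Rightarrow)$ direction does not actually produce the pair $(x_0,\La)$ with $\sA(\alpha)_{x_0}^0\subseteq\La$. The commutator computation you describe only places the transferred elements in the group generated by $\La$ together with conjugates $g\La g^{-1}$, which may well be non-amenable, and you acknowledge this but dismiss it with ``this is handled by refining the choice of $\La$ and $x_0$ using the minimality of the URS.'' That step is not a routine refinement: it is essentially the content of Matte Bon's classification of URSs/confined subgroups of groups between $\sA(\alpha)$ and $\sF(\alpha)$ (\cite[Theorem 6.1]{MB18}), which is the substantial external input of this lemma. The paper's proof of the forward direction is: non-$C^*$-simplicity of $H$ gives a confined amenable subgroup; \cite[Theorem 6.1]{MB18} then yields a finite set $Q\subset X$ with $\st^0_{\sA(\alpha)}(Q)$ amenable; minimality and a limit of conjugates $g_i\st^0_{\sA(\alpha)}(Q)g_i^{-1}$ then give amenability of every $\sA(\alpha)_x^0$. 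As written, your proposal neither cites this theorem as a black box nor supplies a proof of the needed special case, so the forward implication is left with a genuine gap.
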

\begin{proof}
Suppose $H$ is not $C^*$-simple.  Then $H$ contains a confined amenable subgroup. By \cite[Theorem 6.1]{MB18}, there exists $Q\subset X$ finite such that $\st_{\sA(\alpha)}^0(Q)$ is amenable. Given $x\in X$, take a net $(g_i)\subset\sF(\alpha)$ such that $g_iq\to x$ for any $q\in Q$ (existence of such a net $(g_i)$ follows from minimality and proximality of $\sF(\alpha)\act X$, see \cite[Lemma 5.12]{MB18}).  Take $K$ a limit point of $g_i\st_{\sA(\alpha)}^0(Q)g_i^{-1}$. One can readily check that $\sA(\alpha)_x^0\leq K$, hence $\sA(\alpha)_x^0$ is amenable.

Conversely, if $\sA(\alpha)_x^0$ is amenable for all $x\in X$, then, since $\sA(\alpha)_x^0$ is non-trivial for every $x$, it follows that the stabilizer URS $\cU$ of $\sA(\alpha)\act X$ is a non-trivial amenable URS of $\sA(\alpha)$.  By \cite[Theorem 6.1]{MB18},  any element of $\cU$ is a confined subgroup of $\mathsf{F}(\alpha)$ (hence of $H$ as well).  Therefore,  $H$ is not $C^*$-simple.
\end{proof}

\begin{theorem}\label{thm:main}
Let $\alpha$ be a minimal action of a countable group $\G$ on the Cantor set $X$.  The following conditions are equivalent:
\begin{enumerate}
\item[(i)] $\sA(\alpha)$ is non-amenable;
\item[(ii)] Any group $H$ such that $\sA(\alpha)\leq H\leq\sF(\alpha)$ is $C^*$-simple;
\item[(iii)] There exists a $C^*$-simple group $H$ such that $\sA(\alpha)\leq H\leq\sF(\alpha)$.
\end{enumerate}
\end{theorem}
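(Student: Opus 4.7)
The plan is to prove the cycle (ii)$\Rightarrow$(iii)$\Rightarrow$(i)$\Rightarrow$(ii), but Lemma~\ref{lem:pre} simplifies things drastically: it characterizes non-$C^*$-simplicity of any intermediate $H$ by the condition ``$\sA(\alpha)_x^0$ is amenable for all $x\in X$,'' which does not involve $H$. Hence (ii) and (iii) are automatically equivalent, and the theorem reduces to showing that $\sA(\alpha)$ is non-amenable if and only if $\sA(\alpha)_y^0$ is non-amenable for some $y\in X$.

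The easy direction is immediate: since $\sA(\alpha)_y^0\leq\sA(\alpha)$, non-amenability of some rigid stabilizer forces non-amenability of $\sA(\alpha)$, yielding (iii)$\Rightarrow$(i) via Lemma~\ref{lem:pre}. One can also read (iii)$\Rightarrow$(i) more directly from \cite{BKKO}: if $H$ is $C^*$-simple, then it has the unique trace property, and $\sA(\alpha)$ is a non-trivial normal subgroup of $H$ (normal by normality in $\sF(\alpha)$, non-trivial by simplicity \cite[Theorem 4.1]{Nek19} and the minimality of $\alpha$ on the Cantor set), so it cannot be amenable.

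For (i)$\Rightarrow$(ii) I need the converse: amenability of every $\sA(\alpha)_x^0$ forces amenability of $\sA(\alpha)$. My proposed route is to exploit that each generator $\cF(\sigma)$ of $\sA(\alpha)$, arising from a $d$-multisection and $\sigma\in A_d$, is supported in the clopen set $\bigsqcup_{i=1}^d A_i$. Splitting each $A_i$ into two clopen halves in a compatible way, so that the $h_{i,j}$'s restrict to homeomorphisms between matching halves, produces a factorization $\cF(\sigma)=\cF'(\sigma)\cF''(\sigma)$ in which each factor is supported in a proper clopen subset and hence fixes a neighborhood of some point $y\in X$, landing inside a rigid stabilizer $\sA(\alpha)_y^0$. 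This shows that $\sA(\alpha)$ is generated by the union of the amenable rigid stabilizers, which I would then try to upgrade to amenability of $\sA(\alpha)$ via a F\o lner-type or extensive amenability argument, using minimality of $\alpha$ to propagate approximate invariance between rigid stabilizers.

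The main obstacle is precisely this upgrade. It is not formal: a group generated by amenable subgroups need not be amenable, and simple non-amenable groups can fail to be $C^*$-simple, so neither simplicity of $\sA(\alpha)$ nor the existence of amenable generating subgroups suffices. The argument must genuinely use finer structural features of alternating full groups---most plausibly the amenability criteria of Nekrashevych \cite{Nek19} that relate amenability of $\sA(\alpha)$ to local dynamical data on the action, combined with the minimality hypothesis---to turn amenability of all rigid point stabilizers into amenability of $\sA(\alpha)$ itself.
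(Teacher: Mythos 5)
Your handling of the easy implications is fine: (ii)$\Rightarrow$(iii) is trivial, and your (iii)$\Rightarrow$(i) via Lemma \ref{lem:pre} (or directly via \cite{BKKO}, normality in $\sF(\alpha)$, non-triviality and simplicity of $\sA(\alpha)$) matches the paper. The genuine gap is in (i)$\Rightarrow$(ii), which is the whole content of the theorem, and you acknowledge it yourself: you reduce to showing that amenability of every rigid stabilizer $\sA(\alpha)_x^0$ forces amenability of $\sA(\alpha)$, observe that $\sA(\alpha)$ is generated by elements supported in proper clopen sets (hence by amenable subgroups), and then propose to ``upgrade'' this by a F\o lner-type or extensive-amenability argument. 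As you yourself note, no such upgrade is formal, and nothing in your sketch supplies the structural input that would make it work; your guess that one must invoke Nekrashevych's amenability criteria is also not how the paper proceeds. So the proof is missing its crucial step.

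What the paper actually does is different in kind and avoids any direct amenability analysis of $\sA(\alpha)$. Fix a proper non-empty clopen $U\subset X$ and set $\La:=\st_{\sA(\alpha)}(U^{\mathrm{c}})$; then $\La\leq\sA(\alpha)_x^0$ for any $x\in U^{\mathrm{c}}$, so $\La$ is amenable and admits an invariant measure $\mu\in\cP_\La(U)$. Lemma \ref{lem:cover}, applied to the restricted action on $U$, shows that $\mu$ is in fact invariant for the partial action of $\G$ on $U$, i.e.\ $\mu\in\cP_\G(U)$, and Proposition \ref{prop:mea} then yields $\nu\in\cP_\G(X)=\cP_{\sF(\alpha)}(X)$, which has full support by minimality. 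Pushing $\nu$ forward along $\Stab^0_{\sA(\alpha)}$ gives an amenable IRS $\rho$ of $\sA(\alpha)$, non-trivial because $\rho(\{K\in\Sub(\sA(\alpha)):g\in K\})\geq\nu(U^{\mathrm{c}})>0$ for $g\in\La\setminus\{e\}$. By the characterization in \cite{BKKO} (unique trace property iff the only amenable IRS is $\delta_{\{e\}}$ iff no non-trivial amenable normal subgroup) together with simplicity of $\sA(\alpha)$, it follows that $\sA(\alpha)$ itself is amenable. This measure-theoretic construction --- manufacturing an invariant probability measure for the full action from a single amenable rigid stabilizer, converting it into a non-trivial amenable IRS, and letting simplicity plus \cite{BKKO} finish --- is exactly the idea your proposal lacks.
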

\begin{proof}
The implications (ii)$\implies$(iii)$\implies$(i) are immediate.

(i)$\implies$(ii): Suppose that there exists $H$ non-$C^*$-simple such that $\sA(\alpha)\leq H\leq\sF(\alpha)$.  By Lemma \ref{lem:pre}, $\sA(\alpha)_x^0$ is amenable for every $x\in X$. 

Fix a clopen non-empty set $U$ properly contained in $X$.  Since, for any $x\in U^\mathrm{c}$, we have $\La:=\st_{\sA(\alpha)}(U^\mathsf{c})\leq\sA(\alpha)_x^0$, it follows that $\La$ is amenable. 

Let $\mu\in\cP_\La(U)$ and we claim that $\mu\in\cP_\G(U)$.  By regularity, it suffices to show that, for any $K\subset U$ compact and $g\in \G$ such that $g(K)\subset U$, it holds that $\mu(gK)=gK$. By Lemma \ref{lem:cover}, there are $h_1,\dots,h_n\in\La$ and a partition $K=\bigsqcup_{i=1}^n K_i$ into compact sets such that $g|_{K_i}=h_i|_{K_i}$ for $1\leq i\leq n$.  Therefore, 
$$\mu(gK)=\sum_{i=1}^n\mu(gK_i)=\sum_{i=1}^n\mu(h_iK_i)=\sum_{i=1}^n\mu(K_i)=\mu(K)$$
 and $\mu\in\cP_\G(U)$. 

We conclude from Proposition \ref{prop:mea} that there is $\nu\in\cP_\G(X)=\cP_{\sF(\alpha)}(X)$.  Furthermore, by minimality of the action, $\nu$ has full support.  Let $\rho:=(\Stab_{\sA(\alpha)}^0)_*\nu$.

Given $g\in\La\setminus\{e\}$, we have $\rho(\{K\in\Sub(\sA(\alpha)):g\in K\})\geq\nu(U^\mathsf{c})>0$. Hence, $\rho$ is a non-trivial amenable IRS on $\sA(\alpha)$. Since $\sA(\alpha)$ is simple, this implies that $\sA(\alpha)$ is amenable.

\end{proof}
The following is an immediate consequence of Theorem \ref{thm:main}.

\begin{corollary}\label{cor:der} 
Let $\alpha$ be a minimal action of a countable group on the Cantor set.  Then $\sF(\alpha)$ has the unique trace property if and only if it is $C^*$-simple. If $\sA(\alpha)=\sF(\alpha)'$, then $\sF(\alpha)$ is non-amenable if and only if it is $C^*$-simple.
\end{corollary}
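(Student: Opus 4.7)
The plan is to deduce both equivalences directly from Theorem \ref{thm:main}, together with the BKKO characterization of the unique trace property recalled in Section 2 and the fact that amenability is closed under extensions.

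For the first equivalence, I would first note that $C^*$-simplicity always implies the unique trace property (a standard consequence of \cite{BKKO}), so only the converse is at stake. Assuming $\sF(\alpha)$ has the unique trace property, the characterization from \cite{BKKO} stated in Section 2 gives that $\sF(\alpha)$ contains no non-trivial amenable normal subgroup. Since $X$ is an infinite Cantor set and $\alpha$ is minimal, every $\G$-orbit is infinite; the $3$-multisection construction in the proof of Lemma \ref{lem:cover} then produces non-trivial elements of $\sA(\alpha)$, so $\sA(\alpha)$ is a non-trivial normal subgroup of $\sF(\alpha)$. The unique trace hypothesis thus forces $\sA(\alpha)$ to be non-amenable, and Theorem \ref{thm:main} applied with $H=\sF(\alpha)$ delivers $C^*$-simplicity.

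For the second equivalence, $C^*$-simplicity again implies non-amenability in general (a $C^*$-simple group is either trivial or non-amenable, and $\sF(\alpha)$ is non-trivial). For the converse, I would assume $\sA(\alpha)=\sF(\alpha)'$ and that $\sF(\alpha)$ is non-amenable, and argue by contradiction that $\sA(\alpha)$ is non-amenable: were $\sA(\alpha)$ amenable, then $\sF(\alpha)$ would sit in a short exact sequence $1\to\sA(\alpha)\to\sF(\alpha)\to\sF(\alpha)/\sF(\alpha)'\to 1$ with both the kernel and the abelian quotient amenable, hence $\sF(\alpha)$ would be amenable, contradicting the hypothesis. Non-amenability of $\sA(\alpha)$ then yields $C^*$-simplicity of $\sF(\alpha)$ via Theorem \ref{thm:main}.

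There is no real technical difficulty: both equivalences are essentially repackagings of Theorem \ref{thm:main}, the only points of care being that $\sA(\alpha)$ is non-trivial under the standing minimality hypothesis (so that the unique trace characterization has bite) and the elementary extension-closure of amenability used in the second part.
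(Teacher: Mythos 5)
Your proposal is correct and follows the route the paper intends: the paper states the corollary as an immediate consequence of Theorem \ref{thm:main}, and your argument (non-triviality and normality of $\sA(\alpha)$ under minimality, the BKKO characterization of the unique trace property, $C^*$-simplicity implying unique trace, and extension-closure of amenability for the derived-subgroup case) is exactly the intended deduction spelled out. No gaps.
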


\begin{remark}
If $\alpha$ is an action of a group on the non-compact Cantor set $X$, then the topological full group $\sF(\alpha)$ is the group of homeomorphisms on $X$ which are locally given by $\alpha$ and have compact support. Moreover, $\sA(\alpha)$ is defined by requiring that the domains of the partial homeomorphisms of the multisections to be compact-open (as in \cite[Definition 5.1]{MB18}). By arguing as in \cite[Corollary 6.5]{MB18}, the same conclusion of Theorem \ref{thm:main} and Corollary \ref{cor:der} holds in the non-compact case.
\end{remark}
\begin{example}
It follows from \cite[Lemma 6.3]{Mat12} and \cite[Theorem 4.7]{Mat15} that, given a free minimal action $\alpha$ of $\Z^n$ on the Cantor set, it holds that $\sF(\alpha)'=\sA(\alpha)$. Hence, the example of non-amenable topological full group coming from a Cantor minimal $\Z^2$-system in \cite{EM13} is $C^*$-simple.
\end{example}

\bibliographystyle{acm}
\bibliography{bibliografia}
\end{document}